\newtheorem{thm}{Theorem}[section]
\def\indgsv{\operatorname{Ind_{GSV}}}
\def\im{\operatorname{Im}}
\def\coker{\operatorname{coker}}
\def\derlog{\operatorname{Der(-log}}
\def\syzygy{\operatorname{syzygy}}
\def\C{\mathbb C}
\def\dim{\operatorname{dim}}
\def\coker{\operatorname{coker}}
\newtheorem{cor}[thm]{Corollary}
\newtheorem{teo}[thm]{Theorem}
\newtheorem{lem}[thm]{Lemma}
\newtheorem{prop}[thm]{Proposition}
\theoremstyle{definition}
\def\C{\mathbb C}
\def\dim{\operatorname{dim}}
\def\Tor{\operatorname{Tor}}
\def\coker{\operatorname{coker}}
\thanks{The first author has been supported by FAPESP 2022/08662-1 and 2023/04460-8. The second author has been suported by Grant PID2021-124577NB-I00 funded by MCIN/AEI/ 10.13039/501100011033 and by ``ERDF A way of making Europe''. The third author has been pastially supported by FAPESP 2022/15458-1. The third and the fourth author have been partially supported by  FAPESP Grant 2019/07316-0.}
\keywords{Isolated Complete Intersection Singularity, Bruce-Roberts number, relative Bruce-Roberts number}
\subjclass[2000]{Primary 32S25; Secondary 58K40, 32S50}
\begin{document}

\title[The Bruce-Roberts Numbers of 1-Forms on an ICIS]{The Bruce-Roberts Numbers of 1-Forms on an ICIS}

\author{B. K. Lima-Pereira, J.J. Nu\~no-Ballesteros, B. Or\'efice-Okamoto, J.N. Tomazella}

\address{ Instituto de Ci\^encias Matem\'aticas e de Computa\c c\~ao, Universidade de S\~ao Paulo, Caixa Postal 400, 13566-590, S\~ao Carlos, SP, Brazil} 
\email{barbaraklimapereira@icmc.usp.br}

\address{Departament de Matem\`atiques,
Universitat de Val\`encia, Campus de Burjassot, 46100 Burjassot
SPAIN. }
\email{Juan.Nuno@uv.es}

\address{Departamento de Matem\'atica, Universidade Federal de S\~ao Carlos, Caixa Postal 676,
	13560-905, S\~ao Carlos, SP, BRAZIL}
\email{brunaorefice@ufscar.br}

\address{Departamento de Matem\'atica, Universidade Federal de S\~ao Carlos, Caixa Postal 676,
	13560-905, S\~ao Carlos, SP, BRAZIL}

\email{jntomazella@ufscar.br}

\maketitle

\begin{abstract}
We relate the Bruce-Roberts numbers of a 1-form with respect to an ICIS to other invariants as the GSV-index, Tjurina and Milnor numbers. 
\end{abstract}

\section{Introduction}

%
 Let $(X, 0)\subset(\C^{n}, 0)$ be the germ of an analytic variety. We denote by $\Theta_{n}$  the $\mathcal O_n$-module of germs of vector fields on $(\C^n,0)$ and by $\Theta_{X}$, also denoted by $\derlog X)$,  the submodule of $\Theta_{n}$ consisting  of vector fields tangent to $X$ at its smooth points, that is, $$\Theta_{X}=\{\xi\in\Theta_{n}|\;dh(\xi)\in I_{X},\;\forall h\in I_{X}\},$$
where  $I_{X}\subset \mathcal{O}_{n}$ is the ideal that defines $(X,0).$  
Let $f:(\C^{n},0)\to(\C,0)$ be a function germ, the Bruce-Roberts number and the relative Bruce-Roberts number are, respectively, 
$$
\mu_{BR}(f,X)=\dim_{\C}\frac{\mathcal{O}_{n}}{df(\Theta_{X})} \, \, \textup{ and} \, \, \mu_{BR}^{-}(f,X)=\dim_{\C}\frac{\mathcal{O}_{n}}{df(\Theta_{X})+I_{X}},
$$ 
where $df(\Theta_{X})$ is the image of $\Theta_{X}$ by the differential of $f$, $df:\Theta_{n}\to\mathcal{O}_{n}$.
These numbers are defined in \cite{bruce1988critical} and may be considered as generalizations of the Milnor number of the function germ, because if $X=\C^{n}$ then $\Theta_{X}=\Theta_n$ and $df(\Theta_{X})=Jf$. 
Many authors present interesting results about these numbers \cite{ahmed2013invariants, bivia2022bruce,  bivia2024modules, bivia2020mixed, bruce1988critical,  dalbelo2020brasselet, grulha2009euler, kourliouros2021milnor, lima2022numero, lima2021relative, lima2024bruce, lima2023relative, nabeshima2021new, nuno2013bruce, nuno2020bruce}.  


We have proven the relations for $\mu_{BR}(f,X)$ and $\mu_{BR}^-(f,X)$ in terms of other known invariants when $(X,0)$ is an ICIS. More precisely we prove 
\begin{align}
\mu_{BR}^{-}(f,X)&=\mu(X\cap f^{-1}(0),0)+\mu(X,0)-\tau(X,0),\\
\mu_{BR}(f,X)&=\mu_{BR}^{-}(f,X)+\mu(f)-\dim_{\C}\frac{\mathcal{O}_{n}}{Jf+I_{X}}+\dim_{\C}\frac{I_{X}\cap Jf}{I_{X}Jf},
\end{align}
where $\mu(X,0)$ and $\mu(X\cap f^{-1}(0),0)$ are the Milnor number of the ICIS and $\tau(X,0)$ is the Tjurina number of the ICIS, see \cite{lima2024bruce}. 

In \cite{barbosa2024bruce} it is defined the Bruce-Roberts number of a holomorphic 1-form $\omega\in (\C^{n},0)$ with respect a variety $(X,0)$, more precisely
$$
\mu_{BR}(\omega,X)=\dim_{\C}\frac{\mathcal{O}_{n}}{\omega(\Theta_{X})} \, \, \textup{ and} \, \, \mu_{BR}^{-}(\omega,X)=\dim_{\C}\frac{\mathcal{O}_{n}}{\omega(\Theta_{X})+I_{X}}.
$$

Although the proofs only require replacing the differential of the function with the 1-form in the proofs in \cite{lima2021relative, nuno2020bruce}, in \cite{barbosa2024bruce}, the analogous relationship between the Bruce-Roberts number of the 1-form and other invariants is presented.


%

In this paper we extend our previous result in \cite{lima2024bruce} for the Bruce-Roberts number of a 1-form. We denote the components of the 1-form $\omega$ by $\omega_{i}$ then in this case we have 
$\omega=\sum_{i=1}^{n}\omega_{i}dx_{i}$  and the Milnor number of 1-form $\omega$ is given by  $\mu(\omega)=\dim_{\C}\mathcal{O}_{n}/\langle\omega_{i}\rangle.$
%
%

\section{The relative Bruce-Roberts number of a 1-form}

Let $(X,0)$ be an ICIS determined by $\phi\colon (\C^{n},0)\to(\C^{k},0)$, that is,  $I_{X}=\langle\phi_{1},...,\phi_{k} \rangle$. In the same way in our previous work (\cite{lima2021relative, lima2024bruce, nuno2020bruce}) we consider the set of trivial vector fields $\Theta_{X}^{T}$ which are defined as
$$\Theta_{X}^{T}=\ker t\phi+I_X\Theta_n,$$
where $t\phi\colon\Theta_n\to\Theta(\phi)$ is the homomorphism  of $\mathcal{O}_{n}$-modules $\xi\mapsto d\phi\circ \xi$  and $\Theta(\phi)$ is the $\mathcal{O}_{n}$-module of germs of vector fields along $\phi$. We prove a characterization for $\Theta_{X}^{T}$.

\begin{prop}\cite[Proposition 2.1]{lima2024bruce}\label{prop:trivial} 
Let $(X,0)$ be the  ICIS determined by $\phi=(\phi_{1},...,\phi_{k}):(\C^{n},0)\to(\C^{k},0)$, then 
$$
\Theta_{X}^{T}=I_{k+1}\begin{pmatrix}
\tfrac{\partial}{\partial x_{1}}&\hdots&\tfrac{\partial}{\partial x_{n}}\vspace{0.1cm}\\
\tfrac{\partial\phi_{1}}{\partial x_{1}}&\hdots&\tfrac{\partial\phi_{1}}{\partial x_{n}}\\
\vdots&\ddots&\vdots\\
\tfrac{\partial\phi_{k}}{\partial x_{1}}&\hdots&\tfrac{\partial\phi_{k}}{\partial x_{n}}\\
\end{pmatrix}+\left\langle\phi_{i}\tfrac{\partial}{\partial x_{j}},\; i=1,...,k,\;j=1,...,n\right\rangle,
$$
where the first term in the right hand side is the submodule of $\Theta_X$ generated by the $(k+1)$-minors of the matrix.
\end{prop}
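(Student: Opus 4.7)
The plan is to prove the two inclusions separately, after noting that $\langle \phi_i\,\partial/\partial x_j\rangle$ is literally $I_X\Theta_n$, so the second summand on the right-hand side coincides with the second summand in the definition $\Theta_X^T=\ker t\phi+I_X\Theta_n$. The task reduces to proving that $\ker t\phi$ equals the submodule $I_{k+1}(M)$ generated by the $(k+1)$-minors of the matrix $M$, modulo $I_X\Theta_n$.

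For the inclusion $I_{k+1}(M)\subseteq\ker t\phi$, I would pick columns $j_1<\cdots<j_{k+1}$ and expand the corresponding $(k+1)$-minor along its formal first row to obtain the vector field
$$\eta=\sum_{l=1}^{k+1}(-1)^{l+1}\det(M_l)\,\frac{\partial}{\partial x_{j_l}},$$
where $M_l$ is the $k\times k$ submatrix of $J\phi$ obtained by deleting the $l$-th chosen column. Applying $t\phi$, the $i$-th component of $t\phi(\eta)$ is the Laplace expansion of the $(k+1)\times(k+1)$ determinant whose first row is $(\partial\phi_i/\partial x_{j_1},\dots,\partial\phi_i/\partial x_{j_{k+1}})$ placed on top of the corresponding columns of $J\phi$; since the $i$-th row of $J\phi$ appears twice, this determinant is zero. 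Hence $\eta\in\ker t\phi\subseteq\Theta_X^T$.

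For the non-trivial inclusion $\ker t\phi\subseteq I_{k+1}(M)+I_X\Theta_n$, I would take $\xi=\sum_j a_j\,\partial/\partial x_j\in\ker t\phi$, so that the vector $(a_1,\dots,a_n)$ is a syzygy of the rows of $J\phi$. The strategy is to use the ICIS hypothesis to invoke a structure theorem for such syzygies. Since $\phi_1,\dots,\phi_k$ is a regular sequence and $X$ has only an isolated singularity, the ideal of maximal minors of $J\phi$ has the correct depth, and the Eagon--Northcott / Buchsbaum--Rim complex associated to $J\phi$ is a free resolution; its first syzygy module is generated precisely by the vector fields coming from the $(k+1)$-minors of $M$. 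Equivalently, at any smooth point of $X$ at least one $k\times k$ minor of $J\phi$ is a unit, and Cramer's rule locally expresses $\xi$ as a combination of the $(k+1)$-minor generators; the depth conditions then allow one to patch these local expressions to a global identity modulo $I_X\Theta_n$.

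The main obstacle is the last step: turning the generic (Cramer's rule) description of syzygies into a statement that holds globally modulo $I_X\Theta_n$. Concretely, one must control the difference between a putative expression valid away from $\Sigma(X)$ and its behaviour at the singular point $0$, and this is exactly where the ICIS assumption (regular sequence plus isolated singularity, giving maximal depth of the Fitting ideal) is used. Once this syzygy statement is in place, reassembling it with the decomposition $\xi=\eta+\zeta$, $\eta\in\ker t\phi$, $\zeta\in I_X\Theta_n$, completes the proof.
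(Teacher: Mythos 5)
The paper itself offers no proof of this proposition -- it is imported verbatim from \cite[Proposition 2.1]{lima2024bruce} -- so your attempt has to be judged on its own. Your setup is correct: $\langle\phi_i\,\partial/\partial x_j\rangle$ is exactly $I_X\Theta_n$, so the statement reduces to $\ker t\phi+I_X\Theta_n=I_{k+1}(M)+I_X\Theta_n$, and your proof of the inclusion $I_{k+1}(M)\subseteq\ker t\phi$ by Laplace expansion (the resulting $(k+1)\times(k+1)$ determinant has a repeated row) is complete; note it uses nothing about $(X,0)$ being an ICIS.

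The reverse inclusion is where there is a genuine gap, and you concede as much (``the main obstacle is the last step''). Two concrete problems. First, the key assertion -- that the Buchsbaum--Rim complex of $d\phi\colon\mathcal O_n^n\to\mathcal O_n^k$ is acyclic, so that its second differential generates $\ker t\phi$ -- requires $\operatorname{grade}I_k(d\phi)\geq n-k+1$ \emph{in $\mathcal O_n$}. This is not ``equivalent'' to your statement that some $k\times k$ minor of $d\phi$ is a unit at each smooth point of $X$: that only controls $V(I_k(d\phi))\cap X$ and yields grade $n-k$ over $\mathcal O_X$, one short of what the acyclicity criterion demands, and it says nothing about the rank of $d\phi$ off $X$. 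The grade bound in $\mathcal O_n$ is in fact true for an ICIS, but it needs an argument you do not give: if a component $Z$ of $\Sigma(\phi)=V(I_k(d\phi))$ through $0$ had $\dim Z\geq k$, then since $\phi|_Z$ has generic rank at most $k-1$ the fibre $Z\cap\phi^{-1}(0)$ would be positive-dimensional at $0$, contradicting the ICIS hypothesis; hence $\operatorname{codim}\Sigma(\phi)\geq n-k+1$ and, $\mathcal O_n$ being Cohen--Macaulay, the grade is maximal. Second, your fallback plan -- ``patch local Cramer expressions to a global identity modulo $I_X\Theta_n$'' -- is precisely the content of the proposition and is left entirely undone. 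Once the grade bound is actually established, Buchsbaum--Rim gives the stronger equality $\ker t\phi=I_{k+1}(M)$ on the nose and no patching is needed; as written, the decisive step of your argument is missing.
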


 As a consequence of the previous proposition, for any 1-form $\omega$, 
 \begin{equation}\label{dfthetaxt}
\omega (\Theta_{X}^{T})=I_{k+1}\begin{pmatrix}
\omega_{1}&\dots&\omega_{n}\\
\tfrac{\partial \phi_{1}}{\partial x_{1}}&\dots&\tfrac{\partial \phi_{1}}{\partial x_{n}}\\
\vdots&\ddots&\vdots\\
\tfrac{\partial \phi_{n}}{\partial x_{1}}&\dots&\tfrac{\partial \phi_{n}}{\partial x_{n}}\\
\end{pmatrix}+\langle\phi_{j}\omega_{i}\;i=1,...,n;\;j=1,...,k\rangle.
\end{equation} 

In \cite{lima2024bruce} we consider $(X,0)$ an ICIS and a $\mathcal{R}_{X}$-finitely determined function germ.  In this case $(X\cap f^{-1}(0),0)$ defines an ICIS. This condition is very important in our results. Considering a 1-form $\omega$ we use the GSV-index of 1-form over an ICIS $(X,0)$, $\indgsv(\omega,X,0)$. It could be defined algebraically as
$$
\indgsv(\omega,X,0)=\dim_{\C}\frac{\mathcal{O}_{n}}{I_{X}+I_{k+1}{\small \begin{pmatrix}\omega\\
d\phi
\end{pmatrix}}},
$$
with 
$$ 
\begin{pmatrix}
\omega\\
d\phi
\end{pmatrix}=
 \begin{pmatrix}\omega_{1}&\dots&\omega_{n}\\
\tfrac{\partial\phi_{1}}{\partial x_{1}}&\dots&\tfrac{\partial\phi_{1}}{\partial x_{1}}\\
\vdots &\ddots&\vdots\\
\tfrac{\partial\phi_{n}}{\partial x_{1}}&\dots&\tfrac{\partial\phi_{n}}{\partial x_{1}}\\
\end{pmatrix} $$

see \cite[Theorem 2]{ebeling2005indices}.

The next proposition is obtained in the same way  of \cite[Proposition 5.3]{lima2022numero}
\begin{prop}\label{relativo finito se e somente se indice finito}
Let $(X,0)$ be an ICIS determined by $\phi:(\C^{n},0)\to(\C^{k},0)$ and $\omega$ a 1-form. Then
$$
\mu_{BR}^{-}(\omega,X)<\infty \textup{ if, and only if, }\indgsv(\omega,X,0)<\infty.
$$
\end{prop}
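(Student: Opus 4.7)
The plan is to mirror the argument of \cite[Proposition 5.3]{lima2022numero}, replacing the differential $df$ by the 1-form $\omega$ throughout.

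My first step is to rewrite the GSV-index in a form parallel to $\mu_{BR}^-(\omega,X)$. Using (\ref{dfthetaxt}), the second summand $\langle \phi_j\omega_i\rangle$ in $\omega(\Theta_X^T)$ is absorbed by $I_X$, so that
\[
\omega(\Theta_X^T)+I_X=I_{k+1}\begin{pmatrix}\omega\\ d\phi\end{pmatrix}+I_X,
\]
which gives $\indgsv(\omega,X,0)=\dim_{\C}\mathcal{O}_n/(\omega(\Theta_X^T)+I_X)$. Since $\Theta_X^T\subseteq\Theta_X$, I then have $\omega(\Theta_X^T)+I_X\subseteq\omega(\Theta_X)+I_X$ and hence $\mu_{BR}^-(\omega,X)\leq\indgsv(\omega,X,0)$. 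In particular, finiteness of the GSV-index yields finiteness of the relative Bruce-Roberts number.

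For the converse I would prove the local identification $\Theta_{X,p}=\Theta_{X,p}^T$ at every smooth point $p$ of $X$. At such a $p$ the implicit function theorem provides coordinates $(y_1,\dots,y_n)$ with $I_{X,p}=(y_1,\dots,y_k)$; in these coordinates $\Theta_{X,p}$ is generated by $\partial/\partial y_{k+1},\dots,\partial/\partial y_n$ together with the $y_i\,\partial/\partial y_j$. Since $d\phi$ reduces to $(I_k\,|\,0)$ in these coordinates, a direct expansion of the $(k+1)$-minors of Proposition~\ref{prop:trivial} along the first row shows that the only surviving ones are $\pm\partial/\partial y_j$ for $j>k$, and together with the elements $\phi_i\,\partial/\partial y_j$ these recover exactly the generators of $\Theta_{X,p}$. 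Because $(X,0)$ is an ICIS, $\mathrm{Sing}(X)=\{0\}$, so this identification holds at every point of a punctured neighborhood of the origin. Consequently, the ideals $\omega(\Theta_X)+I_X$ and $\omega(\Theta_X^T)+I_X$ have the same vanishing set outside $\{0\}$. Combined with the inclusion above, their vanishing sets are either both $\{0\}$ or both strictly larger, and the standard criterion for finite colength in a local analytic ring delivers the equivalence.

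The main obstacle is the local computation $\Theta_{X,p}=\Theta_{X,p}^T$ at smooth points of $X$. Although natural, it requires careful bookkeeping of the $(k+1)$-minors: in the implicit function coordinates one has to identify which minors vanish because of repeated zero rows in the reduced $d\phi$, and which yield the basic tangent vector fields $\partial/\partial y_j$; the remaining step, that supports are equal off the origin, is then a formality.
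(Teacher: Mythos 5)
Your argument is correct and is essentially the proof the paper delegates to \cite[Proposition 5.3]{lima2022numero}: one direction comes from the inclusion $\omega(\Theta_X^T)+I_X\subseteq\omega(\Theta_X)+I_X$ (after absorbing $\langle\phi_j\omega_i\rangle$ into $I_X$), and the converse from the local identity $\Theta_{X,p}=\Theta_{X,p}^T$ at smooth points of $X$, which forces the two ideals to have the same zero locus off the origin, so that finite colength holds for one exactly when it holds for the other. The only cosmetic imprecision is that in the adapted coordinates one has $\phi=Ay$ for an invertible $k\times k$ matrix $A$ of functions, so $d\phi$ is congruent to $A\cdot(I_k\,|\,0)$ modulo $I_X$ rather than to $(I_k\,|\,0)$ itself; since $\det A$ is a unit this does not change the module generated by the $(k+1)$-minors, and your conclusion stands.
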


\begin{teo}\label{relation}
Let $(X,0)\subset (\C^{n},0)$ be an ICIS and $\omega$ a 1-form such that $\mu_{BR}^{-}(\omega,X)<\infty$, then  $$\mu_{BR}^{-}(\omega,X)=\indgsv (\omega,X,0)+\tau(X,0).$$
\end{teo}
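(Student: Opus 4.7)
The plan is to interpolate between $\mu_{BR}^-(\omega,X)$ and $\indgsv(\omega,X,0)$ by inserting the ideal $\omega(\Theta_X^T)+I_X$ associated with the trivial logarithmic vector fields, following the same strategy as the function germ case in \cite{lima2024bruce}.

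The first observation is that Proposition \ref{prop:trivial} together with the identity \eqref{dfthetaxt} gives
\[\omega(\Theta_X^T)+I_X \;=\; I_{k+1}\!\begin{pmatrix}\omega\\ d\phi\end{pmatrix}+I_X,\]
since the generators $\phi_j\omega_i$ already lie in $I_X$. This identifies $\dim_{\C}\mathcal{O}_n/(\omega(\Theta_X^T)+I_X)$ with $\indgsv(\omega,X,0)$. Because $\Theta_X^T\subseteq\Theta_X$ yields $\omega(\Theta_X^T)+I_X\subseteq\omega(\Theta_X)+I_X$, the short exact sequence
\[0\to\frac{\omega(\Theta_X)+I_X}{\omega(\Theta_X^T)+I_X}\to\frac{\mathcal{O}_n}{\omega(\Theta_X^T)+I_X}\to\frac{\mathcal{O}_n}{\omega(\Theta_X)+I_X}\to 0\]
reduces the theorem to showing that the leftmost quotient $Q$ has $\C$-dimension $\tau(X,0)$.

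The remaining step, which I expect to be the main obstacle, is to identify $Q$ with the Tjurina module $T^1(X,0)$ of the ICIS. The natural candidate is the map induced by $\omega$, namely
\[\Theta_X/\Theta_X^T\longrightarrow Q,\qquad \xi\mapsto\omega(\xi)\bmod(\omega(\Theta_X^T)+I_X),\]
composed with an identification $\Theta_X/\Theta_X^T\cong T^1(X,0)$. The latter is obtained from the description $\Theta_X^T=\ker t\phi+I_X\Theta_n$ together with the surjectivity of $t\phi|_{\Theta_X}\colon\Theta_X\to I_X\Theta(\phi)$, a standard consequence of the ICIS hypothesis. The delicate point is showing that the first map is injective: given $\xi\in\Theta_X$ with $\omega(\xi)\in\omega(\Theta_X^T)+I_X$, one must conclude $\xi\in\Theta_X^T$. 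This is where the finiteness hypothesis $\mu_{BR}^-(\omega,X)<\infty$ (equivalently $\indgsv(\omega,X,0)<\infty$ by Proposition \ref{relativo finito se e somente se indice finito}) enters, and it mirrors the main technical step of the proof in \cite{lima2024bruce} for function germs. Once $Q\cong T^1(X,0)$ is in hand, the claimed relation follows by additivity of $\C$-dimensions in the short exact sequence above.
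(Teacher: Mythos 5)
Your strategy is the same as the paper's (which simply reruns the proof of \cite[Theorem 2.2]{lima2024bruce} with $df$ replaced by $\omega$): insert $\omega(\Theta_{X}^{T})+I_{X}=I_{X}+I_{k+1}\begin{pmatrix}\omega\\ d\phi\end{pmatrix}$ between $I_X$ and $\omega(\Theta_{X})+I_{X}$, so that the middle quotient computes $\indgsv(\omega,X,0)$, and reduce everything to showing that $Q$ has dimension $\tau(X,0)$; the injectivity you isolate as the delicate point is exactly what the Buchsbaum--Rim ``parameter matrix'' verification in the paper's proof is for. However, your bookkeeping does not deliver the statement as written: additivity in your exact sequence gives $\indgsv(\omega,X,0)=\dim_{\C}Q+\mu_{BR}^{-}(\omega,X)$, so once $\dim_{\C}Q=\tau(X,0)$ is established you obtain $\mu_{BR}^{-}(\omega,X)=\indgsv(\omega,X,0)-\tau(X,0)$, the \emph{opposite} sign from the displayed identity. (The minus sign is in fact the intended one: it is what formula (1) of the introduction, equation \eqref{igualdade que vou usar em baixo} and Theorem \ref{relacao usando tor} all require, so the ``$+$'' in the statement is a misprint; but your closing sentence asserts that the stated relation follows from additivity, which it does not, and you should have flagged the discrepancy.)

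A second concrete problem is your justification of $\dim_{\C}\Theta_{X}/\Theta_{X}^{T}=\tau(X,0)$. The map $t\phi|_{\Theta_{X}}\colon\Theta_{X}\to I_{X}\Theta(\phi)$ is \emph{not} surjective in general: one has $t\phi(\Theta_{X})=t\phi(\Theta_{n})\cap I_{X}\Theta(\phi)$, and already for a plane curve $\{f=0\}$ with $f$ not quasi-homogeneous this equals $Jf\cap\langle f\rangle\subsetneq\langle f\rangle$. The equality of $\dim_{\C}\Theta_{X}/\Theta_{X}^{T}$ with $\tau(X,0)$ is \cite[Proposition 4.6]{lima2024bruce} (restated as Corollary \ref{tjurina para o bruce roberts} here) and requires that argument, not the one you sketch. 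Finally, the injectivity of $\Theta_{X}/\Theta_{X}^{T}\to Q$ is the real content of the theorem and is only gestured at in your write-up; carrying it out uses the exactness of the Buchsbaum--Rim complex of $\begin{pmatrix}\omega\\ d\phi\end{pmatrix}$ over $R=\mathcal{O}_{n}/I_{X}$, for which one must check the parameter matrix condition via $l(\coker\otimes R)=\indgsv(\omega,X,0)<\infty$ and $\dim R=n-k$ --- the one verification the paper's proof actually spells out.
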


\begin{proof}
The proof of this theorem follows the same idea of \cite[Theorem 2.2]{lima2024bruce} changing the differential  of $f$, $df$, by the 1-form $\omega$.  The only part it is not immediate is to prove the matrix $\begin{pmatrix}
\omega\\
d\phi
\end{pmatrix}$ is a parameter matrix for the ring $R:=\mathcal{O}_{n}/I_{X}$ in the sense of \cite[p. 214]{buchsbaum1964generalized}. In fact, it follows from
 $$
 l(\coker\begin{pmatrix}
\omega\\
d\phi
\end{pmatrix}\otimes R)=\indgsv(\omega,X,0)<\infty\textup{ and }\dim R=n-k.
$$

\end{proof}



\section{The Bruce-Roberts number of a 1-form}
%
%

In this section, we present how these invariants are related when $(X,0)$ is an ICIS.

The proof of the following  result is the one of \cite[Proposition 2.8]{bivia2020mixed} replacing $df$ by $\omega$.

\begin{lem}\label{relativo finito se e somente se ind finito}
Let $(X,0)$ be an ICIS determined by $\phi:(\C^{n},0)\to(\C^{k},0)$, and $\omega$ a 1-form. If $\mu_{BR}(\omega,X)<\infty$ then $\indgsv(\omega,X,0)<\infty.$ 
\end{lem}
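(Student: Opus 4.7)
The plan is to follow the strategy of \cite[Proposition 2.8]{bivia2020mixed} with the 1-form $\omega$ in place of $df$. I want to compare the zero locus of $\omega(\Theta_{X})$, whose finite codimension is the hypothesis $\mu_{BR}(\omega,X)<\infty$, with the zero locus of $I_{X}+I_{k+1}\begin{pmatrix}\omega\\ d\phi\end{pmatrix}$, whose finite codimension is the desired conclusion $\indgsv(\omega,X,0)<\infty$. It suffices to show that on a sufficiently small representative, the latter locus is contained in the former; by the Nullstellensatz this then forces the relevant ideal to be $\mathfrak{m}$-primary, which is all one needs.

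Concretely, I would fix a small representative and pick a point $p\neq 0$ in $V\bigl(I_{X}+I_{k+1}\begin{pmatrix}\omega\\ d\phi\end{pmatrix}\bigr)$. Then $p\in X$, and since $(X,0)$ is an ICIS with singular locus $\{0\}$, the point $p$ is a smooth point of $X$, so $d\phi(p)$ has rank $k$ and $T_{p}X=\ker d\phi(p)$. The vanishing of all $(k+1)$-minors of the matrix $\begin{pmatrix}\omega\\ d\phi\end{pmatrix}$ at $p$ is then equivalent to $\omega(p)$ lying in the row span of $d\phi(p)$, that is, $\omega(p)|_{T_{p}X}=0$.

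Next I would show that this implies $p\in V(\omega(\Theta_{X}))$. For any $\xi\in\Theta_{X}$, the tangency condition $d\phi_{i}(\xi)\in I_{X}$ together with $p\in V(I_{X})$ give $d\phi_{i}(\xi)(p)=0$, so $\xi(p)\in T_{p}X$. Combining with the previous step, $\omega(\xi)(p)=\omega(p)\bigl(\xi(p)\bigr)=0$, so $p$ lies in the support of $\mathcal{O}_{n}/\omega(\Theta_{X})$. But $\mu_{BR}(\omega,X)<\infty$ forces this support to be $\{0\}$, contradicting $p\neq 0$. Hence $V\bigl(I_{X}+I_{k+1}\begin{pmatrix}\omega\\ d\phi\end{pmatrix}\bigr)\subset\{0\}$, as required.

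The main obstacle, and essentially the only non-bookkeeping step, is the geometric interpretation of the vanishing of the $(k+1)$-minors as $\omega(p)|_{T_{p}X}=0$; this is precisely where the ICIS hypothesis enters, both to guarantee that every $p\neq 0$ in a small neighborhood is a smooth point of $X$ and that $d\phi(p)$ has full rank $k$ at such a point, so that its row span has codimension exactly $n-k=\dim T_{p}X$.
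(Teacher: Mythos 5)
Your proof is correct and follows essentially the same route as the argument the paper invokes from \cite[Proposition 2.8]{bivia2020mixed}: one compares supports, using that at a point $p\neq 0$ of $X$ near the origin the ICIS hypothesis makes $d\phi(p)$ of rank $k$, so the vanishing of the $(k+1)$-minors means $\omega(p)$ annihilates $T_{p}X$ and hence kills $\omega(\Theta_{X})$ at $p$, contradicting $\mu_{BR}(\omega,X)<\infty$. No gaps; the key geometric step and the reduction via the Nullstellensatz are exactly as intended.
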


%
%

\begin{prop} Let $(X,0)$ be an ICIS and $\omega$ a 1-form. Then
 $$\mu_{BR}(\omega,X)<\infty\textup{ if, and only if, }\dim_{\C}\mathcal{O}_{n}/\omega(\Theta_{X}^{T})<\infty.$$
\end{prop}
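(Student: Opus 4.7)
The plan is to exploit the containment $\Theta_X^T \subseteq \Theta_X$, which after applying $\omega$ yields $\omega(\Theta_X^T) \subseteq \omega(\Theta_X)$ and hence the short exact sequence of $\mathcal{O}_n$-modules
$$
0 \to \omega(\Theta_X)/\omega(\Theta_X^T) \to \mathcal{O}_{n}/\omega(\Theta_X^T) \to \mathcal{O}_{n}/\omega(\Theta_X) \to 0.
$$
One implication is immediate from this sequence: if $\dim_\C \mathcal{O}_n/\omega(\Theta_X^T) < \infty$, then its quotient $\mathcal{O}_n/\omega(\Theta_X)$ is finite-dimensional as well, so $\mu_{BR}(\omega,X) < \infty$.

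For the converse, suppose $\mu_{BR}(\omega,X) < \infty$. Then the rightmost term above is finite-dimensional, so it suffices to bound the leftmost. Since $\omega$ itself induces a surjective $\mathcal{O}_n$-linear map $\Theta_X/\Theta_X^T \twoheadrightarrow \omega(\Theta_X)/\omega(\Theta_X^T)$, the problem reduces to showing that $\Theta_X/\Theta_X^T$ has finite $\C$-dimension.

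To establish this, I would first observe that $I_X \Theta_n \subseteq \Theta_X^T$ by definition, so $I_X$ annihilates $\Theta_X/\Theta_X^T$ and this quotient is a finitely generated module over $\mathcal{O}_X = \mathcal{O}_n/I_X$. Next, I would argue that its support is contained in the singular locus of $X$: at a smooth point $p$ of $X$ the differential $d\phi$ has maximal rank $k$, so $\phi_1,\dots,\phi_k$ extend to a local coordinate system $(\phi_1,\dots,\phi_k,v_1,\dots,v_{n-k})$ on $(\C^n,p)$; in these coordinates one computes directly that $\Theta_{X,p}= I_X\langle \partial_{\phi_i}\rangle + \langle \partial_{v_j}\rangle$ and $\Theta_{X,p}^{T}=\ker(t\phi)_p+(I_X\Theta_n)_p = \langle \partial_{v_j}\rangle + I_X\Theta_{n,p}$, which coincide. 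Since $(X,0)$ is an ICIS, its singular locus is $\{0\}$, so $\Theta_X/\Theta_X^T$ is a finitely generated $\mathcal{O}_X$-module supported at the origin and therefore has finite $\C$-dimension, completing the proof.

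The only genuinely technical step is the local verification $\Theta_{X,p}=\Theta_{X,p}^T$ at smooth points of $X$; this is essentially bookkeeping once $\phi_1,\dots,\phi_k$ are included in a coordinate system and the generators in Proposition \ref{prop:trivial} are tracked, but it is the one place where the ICIS assumption (through the isolated singularity of $X$) is really used, and so it constitutes the main technical content of the argument rather than a serious obstacle.
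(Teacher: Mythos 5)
Your proof is correct, but it takes a genuinely different route from the paper's. The paper proves the nontrivial direction by contradiction: if $\dim_{\C}\mathcal{O}_{n}/\omega(\Theta_{X}^{T})=\infty$, then by Nakayama and the Nullstellensatz the zero set of the explicit ideal $\omega(\Theta_{X}^{T})=\langle\phi_{j}\omega_{i}\rangle+I_{k+1}\begin{pmatrix}\omega\\ d\phi\end{pmatrix}$ coming from Proposition \ref{prop:trivial} contains a point $x\neq 0$; since $\mu(\omega)\leq\mu_{BR}(\omega,X)<\infty$ forces some $\omega_{i}(x)\neq 0$, the relations $\phi_{j}(x)\omega_{i}(x)=0$ put $x$ in $V\left(I_{X}+I_{k+1}\begin{pmatrix}\omega\\ d\phi\end{pmatrix}\right)$, contradicting the finiteness of $\indgsv(\omega,X,0)$ supplied by Lemma \ref{relativo finito se e somente se ind finito}. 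You instead bound $\dim_{\C}\omega(\Theta_{X})/\omega(\Theta_{X}^{T})$ by $\dim_{\C}\Theta_{X}/\Theta_{X}^{T}$ via the evaluation surjection and prove the latter finite for any ICIS by a support argument: the quotient is a coherent $\mathcal{O}_{X}$-module whose stalks vanish at smooth points of $X$, hence it is concentrated at the origin. Your local verification that $\Theta_{X,p}=\Theta_{X,p}^{T}$ at smooth points (using that $d\phi$ has rank $k$ along $X\setminus\{0\}$ for an ICIS) is standard and checks out. Your route buys independence from the GSV-index and from Lemma \ref{relativo finito se e somente se ind finito}, uses nothing about $\omega$ in the hard direction beyond the containment $\omega(\Theta_{X}^{T})\subseteq\omega(\Theta_{X})$, and yields the quantitative refinement $\mu_{BR}(\omega,X)\leq\dim_{\C}\mathcal{O}_{n}/\omega(\Theta_{X}^{T})\leq\mu_{BR}(\omega,X)+\dim_{\C}\Theta_{X}/\Theta_{X}^{T}$; moreover the finiteness you establish is information the paper needs later anyway, since $\dim_{\C}\Theta_{X}/\Theta_{X}^{T}=\tau(X,0)$ by Corollary \ref{tjurina para o bruce roberts}. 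The paper's route, by contrast, stays entirely within the determinantal description of $\omega(\Theta_{X}^{T})$ and the previously established finiteness of the GSV index, so it requires no local analysis on $X\setminus\{0\}$.
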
  
\begin{proof}
Assume that $\mu_{BR}(\omega,X)<\infty$ but $\dim_{\C} \mathcal O_{n}/\omega(\Theta_{X}^{T})=\infty$. By Nakayama's Lemma and the Hilbert Nullstelllensatz, the variety 
$$
V(\omega(\Theta_{X}^{T}))=V \left(\langle\phi\omega_{i}\rangle+I_{k+1}\begin{pmatrix}\omega\\
d\phi
\end{pmatrix} \right)
$$ has dimension $\geq 1$.
Let $x\neq 0$ such that $x\in V(\omega(\Theta_{X}^{T}))$. In particular, 
 $$
 \phi(x)\omega_{i}(x)=0,\forall i=1,\dots,n.
 $$ 
Since $\mu(\omega)\leq\mu_{BR}(\omega,X)<\infty$, $(V(\langle\omega_{i}\rangle),0)\subset(\{0\},0)$. 
Hence  $x\in V\left(I_{X}+I_{k+1}\begin{pmatrix}
\omega\\
d\phi
\end{pmatrix}\right)$ and $\indgsv(\omega,X,0)=\infty$, which is a contradiction with the Lemma \ref{relativo finito se e somente se ind finito}.
  
The converse follows because $\omega(\Theta_{X}^{T})\subset \omega(\Theta_{X}).$
\end{proof}

\begin{prop}\label{ICIS dimensao on por dfthetaxt}
Let $(X,0)$ be an ICIS determined by $(\phi_{1},...,\phi_{k}):(\C^{n},0)\to(\C^{k},0)$ and $\omega$ a 1-form such that $\mu_{BR}(\omega,X)<\infty$, then 

$$\mu_{BR}(\omega,X)=\dim_{\C}\frac{\Theta(\phi)}{\langle\omega_{i}\rangle\Theta(\phi)+\langle\phi_{i}\frac{\partial}{\partial x_{j}}-\phi_{j}\frac{\partial}{\partial x_{i}}\rangle}+\indgsv(\omega,X,0)-\dim_{\C}\frac{\omega(\Theta_{X})}{\omega(\Theta_{X}^{T})}.$$
 \end{prop}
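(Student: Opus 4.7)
The plan is to extract the formula from two short exact sequences of finite-dimensional $\mathcal{O}_n$-modules and then to identify the remaining quotient with the right-hand module in the statement by means of the Koszul complex associated to the regular sequence $\phi_1,\dots,\phi_k$. The main obstacle will be a regular-sequence/depth argument carried out modulo $I_X$ (last paragraph).

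First, since $\mu_{BR}(\omega,X)<\infty$ forces $\dim_\C \mathcal{O}_n/\omega(\Theta_X^T)<\infty$ by the previous proposition, the short exact sequence
\begin{equation*}
0 \to \frac{\omega(\Theta_X)}{\omega(\Theta_X^T)} \to \frac{\mathcal{O}_n}{\omega(\Theta_X^T)} \to \frac{\mathcal{O}_n}{\omega(\Theta_X)} \to 0
\end{equation*}
yields $\mu_{BR}(\omega,X) = \dim_\C \mathcal{O}_n/\omega(\Theta_X^T) - \dim_\C \omega(\Theta_X)/\omega(\Theta_X^T)$. I would then exploit the short exact sequence
\begin{equation*}
0 \to \frac{I_X + \omega(\Theta_X^T)}{\omega(\Theta_X^T)} \to \frac{\mathcal{O}_n}{\omega(\Theta_X^T)} \to \frac{\mathcal{O}_n}{I_X + \omega(\Theta_X^T)} \to 0.
\end{equation*}
Since $\langle\phi_j\omega_i\rangle\subseteq I_X$, equation \eqref{dfthetaxt} gives $I_X+\omega(\Theta_X^T)=I_X+I_{k+1}\begin{pmatrix}\omega\\ d\phi\end{pmatrix}$, so the rightmost term has dimension $\indgsv(\omega,X,0)$. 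The proposition therefore reduces to the isomorphism
\begin{equation*}
\frac{I_X}{I_X\cap\omega(\Theta_X^T)} \;\cong\; \frac{\Theta(\phi)}{\langle\omega_i\rangle\Theta(\phi)+\langle\phi_i\frac{\partial}{\partial x_j}-\phi_j\frac{\partial}{\partial x_i}\rangle}.
\end{equation*}

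Because $(X,0)$ is an ICIS, $\phi_1,\dots,\phi_k$ is a regular sequence, so the Koszul complex provides the short exact sequence
\begin{equation*}
0 \to \langle\phi_i\tfrac{\partial}{\partial x_j}-\phi_j\tfrac{\partial}{\partial x_i}\rangle \to \Theta(\phi) \xrightarrow{(g_i)\mapsto\sum g_i\phi_i} I_X \to 0.
\end{equation*}
Composing with the projection $I_X \twoheadrightarrow I_X/(I_X\cap\omega(\Theta_X^T))$ produces a surjection whose kernel manifestly contains $\langle\omega_i\rangle\Theta(\phi)+\langle\phi_i\tfrac{\partial}{\partial x_j}-\phi_j\tfrac{\partial}{\partial x_i}\rangle$; equality of the two kernels is equivalent to the identity
\begin{equation*}
I_X \cap \omega(\Theta_X^T) = \langle\omega_i\rangle I_X.
\end{equation*}
The inclusion $\supseteq$ is immediate from \eqref{dfthetaxt}, and the reverse inclusion, after splitting an element of $I_X\cap\omega(\Theta_X^T)$ according to \eqref{dfthetaxt}, reduces to proving $I_X\cap I_{k+1}\begin{pmatrix}\omega\\ d\phi\end{pmatrix}\subseteq\langle\omega_i\rangle I_X$. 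This is the main obstacle. Each generator of $I_{k+1}\begin{pmatrix}\omega\\ d\phi\end{pmatrix}$ expands along the $\omega$-row as $\sum_l(-1)^l\omega_{j_l} N_l$, where the $N_l$ are $k$-minors of $d\phi$; I would then combine the Cohen--Macaulay property of the ICIS $(X,0)$ with the hypothesis $\mu_{BR}(\omega,X)<\infty$ (which forces $\indgsv(\omega,X,0)<\infty$ by Lemma~\ref{relativo finito se e somente se ind finito}, and hence $V(I_X+I_{k+1}\begin{pmatrix}\omega\\ d\phi\end{pmatrix})=\{0\}$) to argue by a depth/regular-sequence computation in $\mathcal{O}_n/I_X$ that the $(k+1)$-minors behave as non-zerodivisors modulo $I_X$, from which the required containment follows.
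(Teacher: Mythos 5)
Your reduction is sound and is in substance the same as the paper's: the paper packages your two short exact sequences into the single sequence
$$0\longrightarrow \frac{\Theta(\phi)}{\langle\omega_{i}\rangle\Theta(\phi)+\langle\phi_{i}\frac{\partial}{\partial x_{j}}-\phi_{j}\frac{\partial}{\partial x_{i}}\rangle}\stackrel{\alpha}{\longrightarrow}\frac{\mathcal{O}_{n}}{\omega(\Theta_{X}^{T})}\stackrel{\pi}{\longrightarrow}\frac{\mathcal{O}_{n}}{\omega(\Theta_{X}^{T})+I_{X}}\longrightarrow0,$$
and the injectivity of $\alpha$ is exactly equivalent to your identity $I_{X}\cap\omega(\Theta_{X}^{T})=\langle\omega_{i}\rangle I_{X}$, i.e.\ to the containment $I_{X}\cap I_{k+1}\begin{pmatrix}\omega\\ d\phi\end{pmatrix}\subseteq\langle\omega_{i}\rangle I_{X}$ that you isolate. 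So up to that point everything matches.

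The gap is in your final paragraph. ``The $(k+1)$-minors behave as non-zerodivisors modulo $I_{X}$'' is neither the statement you need nor sufficient for it: knowing that each minor $M_{J}$ is a non-zerodivisor in $\mathcal{O}_{n}/I_{X}$ would only let you treat a single product $a_{J}M_{J}\in I_{X}$, whereas a general element of $I_{X}\cap I_{k+1}\begin{pmatrix}\omega\\ d\phi\end{pmatrix}$ is a combination $\sum_{J}a_{J}M_{J}$ and you cannot conclude that each $a_{J}\in I_{X}$. The issue is not about zero-divisors in $\mathcal{O}_{n}/I_{X}$ but about syzygies of $\phi_{1},\dots,\phi_{k}$ over the ring $\mathcal{O}_{n}/I_{k+1}\begin{pmatrix}\omega\\ d\phi\end{pmatrix}$: what one needs is that $\phi_{1},\dots,\phi_{k}$ is a regular sequence on that ring, so that any relation $\sum_{i}a_{i}\phi_{i}\in I_{k+1}\begin{pmatrix}\omega\\ d\phi\end{pmatrix}$ forces $(a_{i})$ to be, modulo $I_{k+1}\begin{pmatrix}\omega\\ d\phi\end{pmatrix}\Theta(\phi)\subseteq\langle\omega_{i}\rangle\Theta(\phi)$, a combination of Koszul syzygies; this gives $\sum_i a_i\phi_i\in I_{k+1}\begin{pmatrix}\omega\\ d\phi\end{pmatrix}I_X\subseteq\langle\omega_i\rangle I_X$ by the row expansion $I_{k+1}\begin{pmatrix}\omega\\ d\phi\end{pmatrix}\subseteq\langle\omega_{i}\rangle$. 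The paper obtains this regularity by noting that $\indgsv(\omega,X,0)<\infty$ forces $\dim\mathcal{O}_{n}/I_{k+1}\begin{pmatrix}\omega\\ d\phi\end{pmatrix}\leq k$, hence this determinantal ring is Cohen--Macaulay of dimension exactly $k$, and $\phi_{1},\dots,\phi_{k}$ is then a system of parameters, hence a regular sequence. Your sketch invokes the Cohen--Macaulayness of $\mathcal{O}_{n}/I_{X}$ instead, which is the wrong ring for this step; as written, the last step does not go through.
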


\begin{proof}
We consider the following  sequence
$$0\longrightarrow \frac{\Theta(\phi)}{\langle\omega_{i}\rangle\Theta(\phi)+\langle\phi_{i}\frac{\partial}{\partial x_{j}}-\phi_{j}\frac{\partial}{\partial x_{i}}\rangle}\stackrel{\alpha}{\longrightarrow}\frac{\mathcal{O}_{n}}{\omega(\Theta_{X}^{T})}\stackrel{\pi}{\longrightarrow}\frac{\mathcal{O}_{n}}{\omega(\Theta_{X}^{T})+\langle\phi_{1},...,\phi_{k}\rangle}\longrightarrow0$$
 with $\pi$ the projection and 
 $$
 \alpha((a_{1},...,a_{k})+\omega\Theta(\phi)+\langle\phi_{i}\partial/\partial x_{j}-\phi_{j}\partial/\partial x_{i}\rangle)=\sum_{i=1}^{k}a_{i}\phi_{i}+\omega(\Theta_{X}^{T}).
 $$
 Obviously, $\pi$ is an epimorphism and $\im\alpha=(I_{X}+\omega(\Theta_{X}^{T}))/\omega(\Theta_{X}^{T})=\ker\pi$. 
 
 To conclude the exactness of the sequence it only remains to see that $\alpha$ is a monomorphism. Since $\mu_{BR}(\omega,X)<\infty$, $\indgsv(\omega,X,0)<\infty$, which implies  
 \begin{equation}\label{eq:dim0}
 \dim\frac{\mathcal O_n}{\langle\phi_1,\dots,\phi_k\rangle+I_{k+1}\begin{pmatrix}\omega\\ d\phi\end{pmatrix}}=0.
 \end{equation}
This gives $\dim \mathcal O_n/I_{k+1}\begin{pmatrix}\omega\\
d\phi\end{pmatrix}\leq k$. Since $I_{k+1}\begin{pmatrix}\omega\\
d\phi\end{pmatrix}$ is the ideal generated by the maximal minors of a matrix of size $(k+1)\times n$, 
 $\mathcal O_n/I_{k+1}\begin{pmatrix}\omega\\
d\phi\end{pmatrix}$ is determinantal, and hence, Cohen-Macaulay of dimension $k$. Again by \eqref{eq:dim0} we conclude that 
 $$
 \phi_{1}+I_{k+1}\begin{pmatrix}
 \omega\\
 d\phi
 \end{pmatrix},...,\phi_{k}+I_{k+1}\begin{pmatrix}
 \omega\\
 d\phi
 \end{pmatrix}\textup{ is a regular sequence in }\mathcal{O}_{n}/I_{k+1}\begin{pmatrix}
 \omega\\
 d\phi
 \end{pmatrix}.$$ 

The proof to conclude that $\alpha$ is a monomorphism follows in the same way as in \cite[Proposition 4.2]{lima2024bruce}. 
\end{proof}

We need another characterization for the Tjurina number which will be a generalization for the IHS case obtained in \cite{barbosa2024bruce} considering a 1-form and IHS, and \cite{lima2024bruce} for any ICIS and an exact 1-form.

\begin{lem}\cite[Lemma 4.3]{lima2024bruce}\label{caracterizacao dos campos triviais usando matrizes}
Let $(X,0)$ be an ICIS determined by $\phi=(\phi_{1},...,\phi_{k}):(\C^{n},0)\to(\C^{k},0)$. Then
 $\xi \in \Theta_{X}^{T}$ if and only if $t\phi(\xi)\in I_{X}t\phi(\Theta_{n})$.
\end{lem}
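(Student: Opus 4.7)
The plan is a direct verification from the definition $\Theta_X^T=\ker t\phi+I_X\Theta_n$, using nothing more than the $\mathcal{O}_n$-linearity of $t\phi$. Both inclusions follow by a one-line manipulation, so the lemma is really a reformulation of the definition.

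For the forward implication, suppose $\xi\in\Theta_X^T$. By definition I can write $\xi=\eta+\sum_{i=1}^{k}\phi_i\zeta_i$ with $\eta\in\ker t\phi$ and $\zeta_i\in\Theta_n$. Applying $t\phi$ gives
$$t\phi(\xi)=t\phi(\eta)+\sum_{i=1}^{k}\phi_i\, t\phi(\zeta_i)=\sum_{i=1}^{k}\phi_i\, t\phi(\zeta_i)\in I_X\, t\phi(\Theta_n),$$
since $I_X=\langle\phi_1,\dots,\phi_k\rangle$.

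For the converse, assume $t\phi(\xi)\in I_X\, t\phi(\Theta_n)$. Writing an arbitrary element of the product module as a finite sum, I can find $\zeta_1,\dots,\zeta_k\in\Theta_n$ with
$$t\phi(\xi)=\sum_{i=1}^{k}\phi_i\, t\phi(\zeta_i)=t\phi\!\left(\sum_{i=1}^{k}\phi_i\zeta_i\right).$$
Hence $\xi-\sum_i\phi_i\zeta_i\in\ker t\phi$, which means $\xi\in\ker t\phi+I_X\Theta_n=\Theta_X^T$, as required.

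The only point that needs a brief justification is the description of $I_X\,t\phi(\Theta_n)$ as finite sums $\sum_i\phi_i\,t\phi(\zeta_i)$ with $\zeta_i\in\Theta_n$. This is the standard description of the product of the ideal $I_X=\langle\phi_1,\dots,\phi_k\rangle$ with the submodule $t\phi(\Theta_n)$ of $\Theta(\phi)$: every element of $I_X\,t\phi(\Theta_n)$ is a finite $\mathcal{O}_n$-linear combination of products $\phi_i\,t\phi(\zeta)$, and by $\mathcal{O}_n$-linearity of $t\phi$ such a combination can be rearranged into the form above. Beyond this, there is no real obstacle; the characterization is essentially a restatement of what it means to lie in $\ker t\phi+I_X\Theta_n$ after pushing forward by $t\phi$.
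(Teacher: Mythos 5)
Your proof is correct and is the natural argument for this statement: given the paper's definition $\Theta_{X}^{T}=\ker t\phi+I_{X}\Theta_{n}$, both implications follow by applying $t\phi$ and using its $\mathcal{O}_{n}$-linearity to absorb coefficients, exactly as you do. The paper itself only cites \cite[Lemma 4.3]{lima2024bruce} for this lemma, and your direct verification matches that standard approach; no gaps.
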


When $(X,0)$ is an ICIS and $f$ is a finitely $\mathcal{R}_{X}$-determined function germ we prove that for any $\xi \in\Theta _{X}$ such that $df(\xi)\in I_{X}$ is trivial. We extend this result for any ICIS and 1-form $\omega$ such that $\mu_{BR}(\omega,X)<\infty$, in order to prove it we need the following lemma.

\begin{lem}\label{icis e sequencia regular no modulo quociente im}
	Let  $(X,0)$ be the ICIS defined by $\phi=(\phi_{1},...,\phi_{k}):(\C^{n},0)\to(\C^{k},0)$ and $\omega$ a 1-form such that $\indgsv(\omega, X, 0)<\infty$. 
	
	Then the sequence $\phi_{1},...,\phi_{k}$ is regular in $\mathcal{O}_{n}^{k+1}/\im\begin{pmatrix}
		\omega\\
		d\phi
		\end{pmatrix}$.
\end{lem}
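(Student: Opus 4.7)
The plan is to show that the module $M := \mathcal{O}_{n}^{k+1}/\im\begin{pmatrix}\omega\\d\phi\end{pmatrix}$ is a Cohen--Macaulay $\mathcal{O}_{n}$-module of dimension $k$, and then to verify that $\phi_{1},\dots,\phi_{k}$ form a system of parameters on $M$. The lemma will follow at once from these two facts, since on a Cohen--Macaulay module every system of parameters is a regular sequence.

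For the Cohen--Macaulay step I would reuse the dimension computation already made in the proof of Proposition \ref{ICIS dimensao on por dfthetaxt}. The hypothesis $\indgsv(\omega,X,0)<\infty$ gives $\dim\mathcal{O}_{n}/(I_{X}+I_{k+1}\begin{pmatrix}\omega\\d\phi\end{pmatrix})=0$, and since $I_{X}$ is generated by the $k$ elements $\phi_{1},\dots,\phi_{k}$, this forces $\dim\mathcal{O}_{n}/I_{k+1}\begin{pmatrix}\omega\\d\phi\end{pmatrix}\leq k$. The Eagon--Northcott/Buchsbaum--Rim bound on the codimension of the ideal of maximal minors of a $(k+1)\times n$ matrix gives the reverse inequality, hence $\codim I_{k+1}\begin{pmatrix}\omega\\d\phi\end{pmatrix}=n-k$. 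This is precisely the condition that turns the Buchsbaum--Rim complex associated with $\begin{pmatrix}\omega\\d\phi\end{pmatrix}$ into an $\mathcal{O}_{n}$-free resolution of $M$ of length $n-k$, and Auslander--Buchsbaum then yields $\depth M=k$. Combined with $\Supp M = V(I_{k+1}\begin{pmatrix}\omega\\d\phi\end{pmatrix})$, which has dimension $k$, this shows that $M$ is Cohen--Macaulay of dimension $k$.

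For the system-of-parameters step, the same finite GSV-index hypothesis gives
$$
\Supp(M/I_{X}M)=\Supp(M)\cap V(I_{X})=V\Bigl(I_{X}+I_{k+1}\begin{pmatrix}\omega\\d\phi\end{pmatrix}\Bigr)=\{0\},
$$
so $\dim M/(\phi_{1},\dots,\phi_{k})M = 0 = \dim M - k$. Thus $\phi_{1},\dots,\phi_{k}$ is indeed a system of parameters on $M$, and therefore a regular sequence on $M$, which is the desired conclusion.

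The main obstacle is the first step, namely upgrading the Cohen--Macaulayness from the ideal quotient $\mathcal{O}_{n}/I_{k+1}\begin{pmatrix}\omega\\d\phi\end{pmatrix}$ (already used in the previous proposition) to the full cokernel $M$. This rests on the Buchsbaum--Rim structure theorem together with Auslander--Buchsbaum; the rest of the proof is then only a translation of the statement into CM-module language, plus the standard fact that systems of parameters on Cohen--Macaulay modules are regular.
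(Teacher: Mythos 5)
Your proof is correct, and it follows essentially the same route that the paper delegates to \cite[Lemma 4.4]{lima2024bruce}: the finiteness of $\indgsv(\omega,X,0)$ forces the ideal of maximal minors of the $(k+1)\times n$ matrix to attain the generic codimension $n-k$, the Buchsbaum--Rim machinery (already invoked by the paper in Theorem \ref{relation} via the ``parameter matrix'' notion) then makes the cokernel $M$ Cohen--Macaulay of dimension $k$, and $\phi_{1},\dots,\phi_{k}$ is a system of parameters on $M$ because $\Supp(M/I_{X}M)=\{0\}$. The only cosmetic caveat is the degenerate case $I_{k+1}\begin{pmatrix}\omega\\ d\phi\end{pmatrix}=\mathcal{O}_{n}$ (i.e.\ $M=0$), where the Eagon--Northcott bound does not apply but the conclusion is vacuous.
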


\begin{proof} The proof of follows in the same way as in \cite[Lemma 4.4]{lima2024bruce} considering the $\mathcal{O}_{n}$-module $M=\mathcal{O}_{n}^{k+1}/\im\begin{pmatrix}
 \omega\\
 d\phi\end{pmatrix}$, and the $\mathcal{O}_{X}$-module $M_{X}=\mathcal{O}_{n}^{k+1}\im\begin{pmatrix}
\omega\\
d\phi
\end{pmatrix}+I_{X}\mathcal{O}_{n}^{k+1}$.

%
%
\end{proof}

\begin{teo}\label{icis independe de f}
Let $(X,0)$ be an ICIS determined by $(\phi_{1},...,\phi_{k}):(\C^{n},0)\to(\C^{k},0)$ and $\omega$ a 1-form such that $\mu_{BR}(\omega,X)<\infty$.
\begin{enumerate}
\item\label{first part} If $\xi\in\Theta_{X}$ and $\omega(\xi)\in I_{X}$, then $\xi\in\Theta_{X}^{T}.$
\item The evaluation map $E:\Theta_{X}\to \omega(\Theta_{X})$ given by $E(\xi)=\omega(\xi)$ induces an isomorphism $$\overline{E}:\frac{\Theta_{X}}{\Theta_{X}^{T}}\to \frac{\omega(\Theta_{X})}{\omega(\Theta_{X}^{T
})}.$$
\end{enumerate}
\end{teo}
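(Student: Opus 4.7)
\medskip

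\noindent\textbf{Proof proposal.} My plan is to deduce (2) directly from (1), and to establish (1) by a $\Tor$-vanishing argument that capitalises on the regular sequence provided by Lemma \ref{icis e sequencia regular no modulo quociente im}.

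For (2), the map $E$ is surjective by the very definition of $\omega(\Theta_X)$, and it sends $\Theta_X^T$ into $\omega(\Theta_X^T)$, so $\overline E$ is well defined and surjective. For injectivity, suppose $\omega(\xi)\in\omega(\Theta_X^T)$ with $\xi\in\Theta_X$, and choose $\eta\in\Theta_X^T\subseteq\Theta_X$ such that $\omega(\xi)=\omega(\eta)$. Then $\xi-\eta\in\Theta_X$ and $\omega(\xi-\eta)=0\in I_X$; applying (1) to $\xi-\eta$ yields $\xi-\eta\in\Theta_X^T$, whence $\xi\in\Theta_X^T$.

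For (1), I would introduce the $\mathcal O_n$-linear map $\Phi\colon\Theta_n\to\mathcal O_n^{k+1}$, $\xi\mapsto(\omega(\xi),d\phi_1(\xi),\dots,d\phi_k(\xi))^T$, whose image is that of the matrix $\begin{pmatrix}\omega\\ d\phi\end{pmatrix}$. For $\xi\in\Theta_X$ with $\omega(\xi)\in I_X$, the vector $\Phi(\xi)$ lies in $\im\Phi\cap I_X\mathcal O_n^{k+1}$: the top entry by hypothesis, the remaining entries because $\xi$ is tangent to $X$. The whole task is to upgrade this to $\Phi(\xi)\in I_X\cdot\im\Phi$, because once $\Phi(\xi)=\sum_i\phi_i\Phi(\eta_i)$, reading off the last $k$ coordinates gives $d\phi(\xi)\in I_X\cdot t\phi(\Theta_n)$, which by Lemma \ref{caracterizacao dos campos triviais usando matrizes} is exactly the statement $\xi\in\Theta_X^T$.

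The inclusion $\im\Phi\cap I_X\mathcal O_n^{k+1}\subseteq I_X\cdot\im\Phi$ will come out as the injectivity part of the long exact $\Tor$ sequence attached to $0\to\im\Phi\to\mathcal O_n^{k+1}\to M\to 0$, where $M=\mathcal O_n^{k+1}/\im\Phi$; since $\mathcal O_n^{k+1}$ is free, it is equivalent to $\Tor_1^{\mathcal O_n}(\mathcal O_n/I_X,M)=0$. This vanishing follows from the regularity of $\phi_1,\dots,\phi_k$ on $\mathcal O_n$ (so that the Koszul complex of $\phi$ is a free resolution of $\mathcal O_n/I_X$) combined with the regularity of the same sequence on $M$ provided by Lemma \ref{icis e sequencia regular no modulo quociente im}, which together make the Koszul complex of $\phi$ with coefficients in $M$ acyclic in positive degrees. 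The step that I expect to require the most care is organising these pieces so that Lemma \ref{icis e sequencia regular no modulo quociente im} actually delivers the required $\Tor$ vanishing at the module $M$; once this is in place, both (1) and (2) fall out directly.
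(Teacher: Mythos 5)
Your proposal is correct and follows essentially the same route as the paper: the paper's proof of this theorem is a reference to Theorem 4.5 of \cite{lima2024bruce} with $df$ replaced by $\omega$, and that argument rests on exactly the two ingredients you invoke, namely the characterization of $\Theta_{X}^{T}$ via $t\phi(\xi)\in I_{X}t\phi(\Theta_{n})$ (Lemma \ref{caracterizacao dos campos triviais usando matrizes}) and the regularity of $\phi_{1},\dots,\phi_{k}$ on $\mathcal{O}_{n}^{k+1}/\im\begin{pmatrix}\omega\\ d\phi\end{pmatrix}$ (Lemma \ref{icis e sequencia regular no modulo quociente im}), the latter supplying through the Koszul complex the vanishing of $\Tor_{1}^{\mathcal{O}_{n}}(\mathcal{O}_{n}/I_{X},M)$ and hence the inclusion $\im\Phi\cap I_{X}\mathcal{O}_{n}^{k+1}\subseteq I_{X}\cdot\im\Phi$. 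Your deduction of part (2) from part (1) is the standard one and matches the cited argument.
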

\begin{proof}
With the Lemma \ref{icis e sequencia regular no modulo quociente im} the proof of this theorem follows in the same way as in \cite[Theorem 4.5]{lima2024bruce} changing the 1-form $df$ by $\omega$. 
\end{proof}

\begin{cor}\label{tjurina para o bruce roberts}
Let $(X,0)$ be an ICIS and $\omega$ a 1-form such that $\mu_{BR}(\omega,X)<\infty$, then 
$$
\dim_{\C}\frac{\omega(\Theta_{X})}{\omega(\Theta_{X}^{T})}=\dim_{\C}\frac{\Theta_{X}}{\Theta_{X}^{T}}=\tau(X,0).
$$
\end{cor}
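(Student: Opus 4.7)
The plan is to establish the two equalities separately. The first,
$$\dim_\C \omega(\Theta_X)/\omega(\Theta_X^T) \;=\; \dim_\C \Theta_X/\Theta_X^T,$$
is immediate from Theorem~\ref{icis independe de f}(2): the evaluation map $\xi\mapsto\omega(\xi)$ descends to the isomorphism $\overline{E}\colon \Theta_X/\Theta_X^T \to \omega(\Theta_X)/\omega(\Theta_X^T)$. Finiteness of the common dimension is automatic, since $\omega(\Theta_X)/\omega(\Theta_X^T)$ is a subquotient of $\mathcal O_n/\omega(\Theta_X^T)$, which is finite-dimensional under the hypothesis $\mu_{BR}(\omega,X)<\infty$.

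For the second equality $\dim_\C \Theta_X/\Theta_X^T = \tau(X,0)$, the key observation is that the left-hand side no longer involves $\omega$; it is a purely ICIS-theoretic quantity. One can therefore invoke the analogous statement from the function-germ case, namely the corresponding corollary in \cite{lima2024bruce}, whose proof depends only on $(X,0)$. Concretely, one applies $t\phi$: using that $\ker t\phi\subset\Theta_X^T$, together with the identifications $t\phi(\Theta_X)=t\phi(\Theta_n)\cap I_X\Theta(\phi)$ and $t\phi(\Theta_X^T)=I_X\,t\phi(\Theta_n)$, one obtains
$$
\frac{\Theta_X}{\Theta_X^T} \;\cong\; \frac{t\phi(\Theta_n)\cap I_X\Theta(\phi)}{I_X\,t\phi(\Theta_n)} \;\cong\; \Tor_1^{\mathcal O_n}\bigl(\Theta(\phi)/t\phi(\Theta_n),\,\mathcal O_X\bigr),
$$
the second isomorphism coming from tensoring $0\to t\phi(\Theta_n)\to\Theta(\phi)\to\Theta(\phi)/t\phi(\Theta_n)\to 0$ with $\mathcal O_X$ and using that $\Theta(\phi)$ is $\mathcal O_n$-free.

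The main obstacle is the final identification of this $\Tor_1$ with the Tjurina module $T^1(X,0)=\Theta(\phi)/\bigl(t\phi(\Theta_n)+I_X\Theta(\phi)\bigr)$, whose dimension is $\tau(X,0)$. This is handled through a Koszul/Cohen--Macaulay argument on the relevant quotient module, exactly parallel to Lemma~\ref{icis e sequencia regular no modulo quociente im}, and ultimately boils down to the regular-sequence behaviour of $\phi_1,\dots,\phi_k$ on the appropriate module. Every other step is formal, and combining the two equalities yields the statement.
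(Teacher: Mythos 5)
Your proposal is correct and follows essentially the same route as the paper: the first equality is exactly Theorem~\ref{icis independe de f}(2) (the isomorphism $\overline{E}$), and the second equality is the $\omega$-independent fact $\dim_\C \Theta_X/\Theta_X^T=\tau(X,0)$, which the paper likewise obtains by citing \cite[Proposition~4.6]{lima2024bruce}. Your additional sketch via $t\phi$ and $\Tor_1$ is a reasonable outline of how that cited proposition is proved, but it is not needed beyond the citation.
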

\begin{proof}
The first equality follows from Theorem \ref{icis independe de f} and the second one by \cite[Proposition 4.6]{lima2024bruce}. 

\end{proof}

Just like in \cite[p.15]{lima2024bruce} we obtain from the proof of Teorema \ref{icis independe de f} the isomorphisms
$$
{\frac{\omega(\Theta_{X})+I_{X}}{\omega(\Theta_{X})}}\approx\frac{{\Theta(\phi)}}{\langle \omega_{i}\rangle{\Theta(\phi)}+\syzygy(\phi_{1},...,\phi_{k})}\approx\frac{I_{X}}{\langle \omega_{i}\rangle I_{X}}.
$$
 
Therefore from Propositions \ref{ICIS dimensao on por dfthetaxt} and \ref{tjurina para o bruce roberts} we have 
 \begin{equation}\label{igualdade que vou usar em baixo}
 \mu_{BR}(f,X)=\dim_{\C}I_{X}/\langle\omega_{i}\rangle I_{X}+\indgsv(\omega,X,0)-\tau(X,0).
 \end{equation}

\begin{teo}\label{relacao usando tor}
Let $(X,0)$ be an ICIS and $\omega$ a 1-form such that $\mu_{BR}(\omega,X)<\infty$, then 
\begin{align*}
\mu_{BR}(\omega,X)&=\mu(\omega)+\indgsv(\omega, X, 0)-\tau(X,0)-\dim_{\C}\frac{\mathcal{O}_{n}}{\langle\omega_{i}\rangle+I_{X}}+\dim_{\C}\frac{I_{X}\cap \langle\omega_{i}\rangle}{I_{X}\langle\omega_{i}\rangle}.\\
\mu_{BR}(\omega,X)&=\mu(\omega)+\mu_{BR}^{-}(\omega,X)-\dim_{\C}\frac{\mathcal{O}_{n}}{\langle\omega_{i}\rangle+I_{X}}+\dim_{\C}\frac{I_{X}\cap \langle\omega_{i}\rangle}{I_{X}\langle\omega_{i}\rangle}.
\end{align*}
\end{teo}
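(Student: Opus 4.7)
The plan is to derive both equalities as direct consequences of the already-established identity \eqref{igualdade que vou usar em baixo},
$$\mu_{BR}(\omega,X)=\dim_{\C}\frac{I_X}{\langle\omega_i\rangle I_X}+\indgsv(\omega,X,0)-\tau(X,0),$$
by rewriting the length $\dim_{\C} I_X/\langle\omega_i\rangle I_X$ in terms of $\mu(\omega)$ together with the two correction terms $\dim_{\C}\mathcal{O}_n/(\langle\omega_i\rangle+I_X)$ and $\dim_{\C}(I_X\cap\langle\omega_i\rangle)/(I_X\langle\omega_i\rangle)$ that appear on the right-hand side of the statement.

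The key input is a pair of standard short exact sequences of $\mathcal{O}_n$-modules. First I would use
$$0\longrightarrow\frac{I_X\cap\langle\omega_i\rangle}{I_X\langle\omega_i\rangle}\longrightarrow\frac{I_X}{I_X\langle\omega_i\rangle}\longrightarrow\frac{I_X}{I_X\cap\langle\omega_i\rangle}\longrightarrow 0,$$
together with the second isomorphism theorem identification $I_X/(I_X\cap\langle\omega_i\rangle)\cong(I_X+\langle\omega_i\rangle)/\langle\omega_i\rangle$. This is combined with
$$0\longrightarrow\frac{I_X+\langle\omega_i\rangle}{\langle\omega_i\rangle}\longrightarrow\frac{\mathcal{O}_n}{\langle\omega_i\rangle}\longrightarrow\frac{\mathcal{O}_n}{I_X+\langle\omega_i\rangle}\longrightarrow 0,$$
which gives $\dim_{\C}(I_X+\langle\omega_i\rangle)/\langle\omega_i\rangle=\mu(\omega)-\dim_{\C}\mathcal{O}_n/(I_X+\langle\omega_i\rangle)$. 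Assembling the three dimension counts yields
$$\dim_{\C}\frac{I_X}{\langle\omega_i\rangle I_X}=\mu(\omega)-\dim_{\C}\frac{\mathcal{O}_n}{\langle\omega_i\rangle+I_X}+\dim_{\C}\frac{I_X\cap\langle\omega_i\rangle}{I_X\langle\omega_i\rangle},$$
which, substituted into \eqref{igualdade que vou usar em baixo}, produces the first equality. The second equality then follows by invoking Theorem \ref{relation} to eliminate the combination of $\indgsv(\omega,X,0)$ and $\tau(X,0)$ in favour of $\mu_{BR}^-(\omega,X)$.

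The only technical point is to verify finiteness of every module length appearing in the alternating-sum manipulation, so that dimensions may be added and subtracted legitimately along the exact sequences. The hypothesis $\mu_{BR}(\omega,X)<\infty$ forces $\mu(\omega)\leq\mu_{BR}(\omega,X)<\infty$, hence $V(\langle\omega_i\rangle)=\{0\}$; this localizes the supports of $(I_X\cap\langle\omega_i\rangle)/(I_X\langle\omega_i\rangle)$ and $\mathcal{O}_n/(I_X+\langle\omega_i\rangle)$ at the origin and makes them finite dimensional, while finiteness of $\indgsv(\omega,X,0)$ is supplied by Lemma \ref{relativo finito se e somente se ind finito}. Beyond this bookkeeping, the derivation is elementary homological algebra of short exact sequences and I do not anticipate any real obstacle.
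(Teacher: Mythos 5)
Your derivation is correct and is essentially the proof the paper intends: the theorem is stated immediately after equation \eqref{igualdade que vou usar em baixo} with no further argument, and rewriting $\dim_{\C} I_X/\langle\omega_i\rangle I_X$ as $\mu(\omega)-\dim_{\C}\mathcal{O}_n/(\langle\omega_i\rangle+I_X)+\dim_{\C}(I_X\cap\langle\omega_i\rangle)/(I_X\langle\omega_i\rangle)$ via the filtration $I_X\langle\omega_i\rangle\subset I_X\cap\langle\omega_i\rangle\subset I_X$ together with $I_X/(I_X\cap\langle\omega_i\rangle)\cong(I_X+\langle\omega_i\rangle)/\langle\omega_i\rangle$ is exactly the intended bookkeeping, with your finiteness checks adequate. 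One caveat: your final step cites Theorem \ref{relation}, which as printed reads $\mu_{BR}^{-}(\omega,X)=\indgsv(\omega,X,0)+\tau(X,0)$, whereas the two displayed formulas you are proving are mutually consistent only if $\mu_{BR}^{-}(\omega,X)=\indgsv(\omega,X,0)-\tau(X,0)$ (the sign matching the function-germ formula in the introduction); so you are implicitly relying on a sign-corrected version of Theorem \ref{relation} rather than its literal statement, which is worth making explicit.
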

%
In order to improve the formula for $\mu_{BR}(\omega,X)$ in Theorem \ref{relacao usando tor}, we observe  
\[
\frac{I_{X}\cap \langle\omega_{i}\rangle}{I_{X}\langle\omega_{i}\rangle} \approx\Tor_{1}^{\mathcal{O}_{n}}\left(\frac{\mathcal{O}_{n}}{I_{X}},\frac{\mathcal{O}_{n}}{\langle\omega_{i}\rangle}\right), 
\]
(see \cite{de2013local}). 

Comparing the formula for $\mu_{BR}(\omega,X)$ in the previous theorem with the formula of \cite[Theorem 1]{barbosa2024bruce} in the IHS case, we get the following:

\begin{cor}\label{hipersuperfice}
Let $(X,0)\subset(\C^{n},0)$ be an IHS and $\omega$ a 1-form such that $\mu_{BR}(\omega,X)<\infty$. Then,
$$\dim_{\C}\Tor_{1}^{\mathcal{O}_{n}}\left(\frac{\mathcal{O}_{n}}{I_{X}},\frac{\mathcal{O}_{n}}{\langle\omega_{i}\rangle}\right)=\dim_{\C}\frac{\mathcal{O}_{n}}{I_{X}+\langle\omega_{i}\rangle}.$$
\end{cor}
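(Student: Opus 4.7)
The plan is to execute exactly the comparison announced in the paragraph preceding the statement. Theorem \ref{relacao usando tor}, being valid for any ICIS, specializes in particular to the IHS case and gives
$$
\mu_{BR}(\omega,X)=\mu(\omega)+\indgsv(\omega,X,0)-\tau(X,0)-\dim_{\C}\frac{\mathcal{O}_{n}}{\langle\omega_{i}\rangle+I_{X}}+\dim_{\C}\frac{I_{X}\cap \langle\omega_{i}\rangle}{I_{X}\langle\omega_{i}\rangle}.
$$
On the other hand, \cite[Theorem 1]{barbosa2024bruce}, stated directly for the hypersurface case, yields the cleaner expression
$$
\mu_{BR}(\omega,X)=\mu(\omega)+\indgsv(\omega,X,0)-\tau(X,0).
$$

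Subtracting the two identities, the terms $\mu(\omega)$, $\indgsv(\omega,X,0)$ and $\tau(X,0)$ cancel and one is left with
$$
\dim_{\C}\frac{I_{X}\cap \langle\omega_{i}\rangle}{I_{X}\langle\omega_{i}\rangle}=\dim_{\C}\frac{\mathcal{O}_{n}}{I_{X}+\langle\omega_{i}\rangle}.
$$
The Tor identification recalled just before the corollary, namely $I_{X}\cap\langle\omega_{i}\rangle/I_{X}\langle\omega_{i}\rangle\cong\Tor_{1}^{\mathcal{O}_{n}}(\mathcal{O}_{n}/I_{X},\mathcal{O}_{n}/\langle\omega_{i}\rangle)$ (see \cite{de2013local}), then converts the left-hand side into the Tor dimension, yielding the claimed equality.

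The only delicate point is verifying that the hypotheses of \cite[Theorem 1]{barbosa2024bruce} are in force under our standing assumption $\mu_{BR}(\omega,X)<\infty$; this follows from Lemma \ref{relativo finito se e somente se ind finito} together with the elementary bound $\mu(\omega)\le\mu_{BR}(\omega,X)$ already used in the proof of the preceding proposition. As a sanity check, one can bypass the comparison entirely: in the IHS case $I_{X}=\langle\phi\rangle$ with $\phi$ a non-zero-divisor in $\mathcal{O}_{n}$, so tensoring the free resolution $0\to\mathcal{O}_{n}\stackrel{\phi}{\to}\mathcal{O}_{n}\to\mathcal{O}_{n}/I_{X}\to 0$ with $\mathcal{O}_{n}/\langle\omega_{i}\rangle$ produces the four-term exact sequence
$$
0\to\Tor_{1}^{\mathcal{O}_{n}}\!\left(\tfrac{\mathcal{O}_{n}}{I_{X}},\tfrac{\mathcal{O}_{n}}{\langle\omega_{i}\rangle}\right)\to\tfrac{\mathcal{O}_{n}}{\langle\omega_{i}\rangle}\stackrel{\phi}{\to}\tfrac{\mathcal{O}_{n}}{\langle\omega_{i}\rangle}\to\tfrac{\mathcal{O}_{n}}{I_{X}+\langle\omega_{i}\rangle}\to 0,
$$
whose terms are all finite-dimensional since $\mu(\omega)<\infty$; the alternating sum of $\C$-dimensions then reproduces exactly the required identity, confirming the result.
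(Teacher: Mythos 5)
Your main argument is exactly the paper's: the corollary is obtained by comparing the formula of Theorem \ref{relacao usando tor} (specialized to an IHS) with the formula of \cite[Theorem 1]{barbosa2024bruce} and cancelling $\mu(\omega)$, $\indgsv(\omega,X,0)$ and $\tau(X,0)$, using the $\Tor_1$ identification stated just before the corollary. Your additional direct verification via the resolution $0\to\mathcal{O}_{n}\stackrel{\phi}{\to}\mathcal{O}_{n}\to\mathcal{O}_{n}/I_{X}\to 0$ and the alternating sum of dimensions (all finite since $\mu(\omega)\leq\mu_{BR}(\omega,X)<\infty$) is correct and is a worthwhile bonus, since it makes the identity self-contained and independent of the external reference.
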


\begin{cor}\label{caso k=2}Let $(X,0)$ be an ICIS of codimension $2$ and $\omega$  a 1-form such that $\mu_{BR}(\omega,X)<\infty$ then
$$\mu_{BR}(\omega,X)=\mu(f)+\indgsv(\omega,X,0)-\tau(X,0)+\dim_{\C}\frac{\mathcal{O}_{n}}{\langle\omega_{i}\rangle+\langle\phi_{1},\phi_{2}\rangle}.$$ 
\end{cor}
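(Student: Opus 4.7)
The plan is to combine Theorem~\ref{relacao usando tor} with the Koszul resolution of $\mathcal{O}_n/I_X$ and finish via Matlis duality. Interpreting the $\mu(f)$ in the statement as $\mu(\omega)$ (evidently a typo, given the statement of Theorem~\ref{relacao usando tor}), the first formula of that theorem reduces the corollary to the single identity
$$\dim_\C \frac{I_X \cap \langle\omega_i\rangle}{I_X\langle\omega_i\rangle} = 2\,\dim_\C \frac{\mathcal{O}_n}{I_X + \langle\omega_i\rangle}.$$
Writing $T_i := \Tor_i^{\mathcal{O}_n}(\mathcal{O}_n/I_X, \mathcal{O}_n/\langle\omega_i\rangle)$ and using the isomorphism recalled before the statement, this amounts to showing $\dim_\C T_1 = 2\dim_\C T_0$.

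First I would note that, since $\mu(\omega) \leq \mu_{BR}(\omega,X) < \infty$, the ideal $\langle\omega_1,\dots,\omega_n\rangle$ is $\mathfrak{m}$-primary, so $\omega_1,\dots,\omega_n$ is a system of parameters in the regular local ring $\mathcal{O}_n$, and hence a regular sequence. Consequently $R := \mathcal{O}_n/\langle\omega_i\rangle$ is an Artinian complete intersection, in particular a Gorenstein local $\C$-algebra of $\C$-dimension $\mu(\omega)$.

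Next, because $I_X = \langle\phi_1,\phi_2\rangle$ is a codimension $2$ complete intersection, the Koszul complex on $\phi_1,\phi_2$ provides a length-$2$ free resolution of $\mathcal{O}_n/I_X$. Tensoring with $R$ gives a three-term complex $R \to R^2 \to R$ whose homology groups are precisely the $T_i$: a direct inspection identifies $T_0 \cong \mathcal{O}_n/(I_X+\langle\omega_i\rangle)$, $T_1 \cong (I_X \cap \langle\omega_i\rangle)/(I_X\langle\omega_i\rangle)$, and $T_2 \cong \Ann_R(I_X R)$. Since each of the three terms of the complex has the same $\C$-dimension $\mu(\omega)$, the Euler characteristic of the complex vanishes, yielding $\dim_\C T_0 - \dim_\C T_1 + \dim_\C T_2 = 0$.

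The key step is then to prove $\dim_\C T_2 = \dim_\C T_0$, and this is where the Gorenstein property of $R$ enters: Matlis duality on an Artinian Gorenstein local ring gives $\Ann_R(J) \cong \Hom_\C(R/J,\C)$ for every ideal $J \subseteq R$, and applied to $J = I_X R$ this delivers the required equality of $\C$-dimensions. Combined with the Euler characteristic identity above, this forces $\dim_\C T_1 = 2\dim_\C T_0$, and substituting into Theorem~\ref{relacao usando tor} gives the corollary. The main obstacle I would flag is precisely this Matlis-duality step, which rests on the Gorensteinness of $R$; the rest is routine homological bookkeeping once the regular-sequence property of $\omega_1,\dots,\omega_n$ is in hand.
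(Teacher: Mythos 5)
Your argument is correct. The reduction is right: interpreting the $\mu(f)$ in the statement as $\mu(\omega)$ (indeed a typo), the first identity of Theorem~\ref{relacao usando tor} reduces the corollary to $\dim_{\C}\bigl(I_{X}\cap\langle\omega_{i}\rangle\bigr)/\bigl(I_{X}\langle\omega_{i}\rangle\bigr)=2\dim_{\C}\mathcal{O}_{n}/(I_{X}+\langle\omega_{i}\rangle)$, and each step you give to establish this checks out: $\omega_{1},\dots,\omega_{n}$ is a regular sequence because $\mu(\omega)\leq\mu_{BR}(\omega,X)<\infty$, so $R=\mathcal{O}_{n}/\langle\omega_{i}\rangle$ is Artinian Gorenstein of length $\mu(\omega)$; the Koszul complex on $\phi_{1},\phi_{2}$ resolves $\mathcal{O}_{n}/I_{X}$ since a codimension-$2$ ICIS is a complete intersection; the identifications $T_{0}\cong\mathcal{O}_{n}/(I_{X}+\langle\omega_{i}\rangle)$, $T_{1}\cong(I_{X}\cap\langle\omega_{i}\rangle)/(I_{X}\langle\omega_{i}\rangle)$, $T_{2}\cong\Ann_{R}(I_{X}R)$ are standard; the vanishing Euler characteristic gives $\dim_{\C}T_{1}=\dim_{\C}T_{0}+\dim_{\C}T_{2}$; and Matlis duality on the Artinian Gorenstein ring $R$ gives $\dim_{\C}\Ann_{R}(J)=\dim_{\C}R/J$, hence $\dim_{\C}T_{2}=\dim_{\C}T_{0}$. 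The paper itself offers no argument here beyond a citation to Section~4.1 of the earlier reference, where the analogous codimension-$2$ computation is carried out for $df$ in place of $\omega$; your write-up supplies exactly the content that citation outsources, by what is essentially the same route (length-$2$ Koszul resolution plus the self-duality coming from Gorensteinness), so there is no gap and nothing genuinely divergent --- only a self-contained version of the omitted computation.
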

\begin{proof}
It follows immediately from our results in \cite[section 4.1]{lima2024bruce}
\end{proof}

\bibliography{ref}

\begin{thebibliography}{10}

\bibitem{ahmed2013invariants}
I.~Ahmed, M.~A.~S. Ruas, and J.~N. Tomazella.
\newblock Invariants of topological relative right equivalences.
\newblock In {\em Mathematical Proceedings of the Cambridge Philosophical
  Society}, volume 155, pages 307--315. Cambridge University Press, 2013.

\bibitem{barbosa2024bruce}
P.~Barbosa, A.~Fern{\'a}ndez-P{\'e}rez, and V.~Le{\'o}n.
\newblock The bruce-roberts number of holomorphic 1-forms along complex
  analytic varieties.
\newblock {\em arXiv preprint arXiv:2409.01237}, 2024.

\bibitem{bivia2022bruce}
C.~Bivi{\`a}-Ausina, K.~Kourliouros, and M.~A.~S. Ruas.
\newblock Bruce-roberts numbers and quasihomogeneous functions on analytic
  varieties.
\newblock {\em arXiv preprint arXiv:2212.01919}, 2022.

\bibitem{bivia2024modules}
C.~Bivi{\`a}-Ausina, K.~Kourliouros, and M.~A.~S. Ruas.
\newblock Modules of derivations, logarithmic ideals and singularities of maps
  on analytic varieties.
\newblock {\em arXiv preprint arXiv:2407.02947}, 2024.

\bibitem{bivia2020mixed}
C.~Bivi{\`a}-Ausina and M.~A.~S. Ruas.
\newblock Mixed bruce--roberts numbers.
\newblock {\em Proceedings of the Edinburgh Mathematical Society},
  63(2):456--474, 2020.

\bibitem{bruce1988critical}
J.~Bruce and R.~Roberts.
\newblock Critical points of functions on analytic varieties.
\newblock {\em Topology}, 27(1):57--90, 1988.

\bibitem{buchsbaum1964generalized}
D.~A. Buchsbaum and D.~S. Rim.
\newblock A generalized koszul complex. ii. depth and multiplicity.
\newblock {\em Transactions of the American Mathematical Society},
  111(2):197--224, 1964.

\bibitem{dalbelo2020brasselet}
T.~M. Dalbelo and L.~Hartmann.
\newblock Brasselet number and newton polygons.
\newblock {\em Manuscripta mathematica}, 162(1):241--269, 2020.

\bibitem{de2013local}
T.~De~Jong and G.~Pfister.
\newblock {\em Local analytic geometry: Basic theory and applications}.
\newblock Springer Science \& Business Media, 2013.

\bibitem{ebeling2005indices}
W.~Ebeling and S.~M. Gusein-Zade.
\newblock Indices of vector fields or 1-forms and characteristic numbers.
\newblock {\em Bulletin of the London Mathematical Society}, 37(5):747--754,
  2005.

\bibitem{grulha2009euler}
N.~D.~G. Grulha.
\newblock The euler obstruction and bruce--roberts'milnor number.
\newblock {\em Quarterly journal of mathematics}, 60(3):291--302, 2009.

\bibitem{kourliouros2021milnor}
K.~Kourliouros.
\newblock The milnor-palamodov theorem for functions on isolated hypersurface
  singularities.
\newblock {\em Bulletin of the Brazilian Mathematical Society, New Series},
  52(2):405--413, 2021.

\bibitem{lima2022numero}
B.~K. Lima-Pereira.
\newblock O n{\'u}mero de bruce-roberts sobre uma icis.
\newblock 2022.

\bibitem{lima2021relative}
B.~K. Lima-Pereira, J.~J. Nuno-Ballesteros, B.~Orefice-Okamoto, and J.~N.
  Tomazella.
\newblock The relative bruce--roberts number of a function on a hypersurface.
\newblock {\em Proceedings of the Edinburgh Mathematical Society},
  64(3):662--674, 2021.

\bibitem{lima2024bruce}
B.~K. Lima-Pereira, J.~J. Nuno-Ballesteros, B.~Or{\'e}fice-Okamoto, and J.~N.
  Tomazella.
\newblock The bruce--roberts numbers of a function on an icis.
\newblock {\em The Quarterly Journal of Mathematics}, 75(1):31--50, 2024.

\bibitem{lima2023relative}
B.~K. Lima-Pereira, M.~A.~S. Ruas, and H.~Santana.
\newblock Relative bruce-roberts number and chern obstruction.
\newblock {\em arXiv preprint arXiv:2311.03548}, 2023.

\bibitem{nabeshima2021new}
K.~Nabeshima and S.~Tajima.
\newblock A new algorithm for computing logarithmic vector fields along an
  isolated singularity and bruce-roberts milnor ideals.
\newblock {\em Journal of Symbolic Computation}, 107:190--208, 2021.

\bibitem{nuno2013bruce}
J.~Nuno-Ballesteros, B.~Or{\'e}fice, and J.~N. Tomazella.
\newblock The bruce--roberts number of a function on a weighted homogeneous
  hypersurface.
\newblock {\em The Quarterly Journal of Mathematics}, 64(1):269--280, 2013.

\bibitem{nuno2020bruce}
J.~J. Nu{\~n}o-Ballesteros, B.~Or{\'e}fice-Okamoto, B.~Lima-Pereira, and J.~N.
  Tomazella.
\newblock The bruce--roberts number of a function on a hypersurface with
  isolated singularity.
\newblock {\em The Quarterly Journal of Mathematics}, 71(3):1049--1063, 2020.

\end{thebibliography}
\bibliographystyle{abbrv}

\end{document}